\newtheorem{thm}{Theorem}
\newtheorem{prop}[thm]{Proposition}
\theoremstyle{remark}
\newtheorem{rem}[thm]{Remark}
\newcommand{\FF}{\mathbb{F}}
\newcommand{\BB}{\mathcal{B}}
\newcommand{\CC}{\mathcal{C}}
\newcommand{\cD}{\mathcal{D}}
\newcommand{\0}{\mathbf{0}}
\newcommand{\1}{\mathbf{1}}
\DeclareMathOperator{\Aut}{Aut}
\DeclareMathOperator{\wt}{wt}
\begin{document}

\title{On the classification of linear complementary dual codes}

\author{
Makoto Araya\thanks{Department of Computer Science,
Shizuoka University,
Hamamatsu 432--8011, Japan.
email: {\tt araya@inf.shizuoka.ac.jp}}
and 
Masaaki Harada\thanks{
Research Center for Pure and Applied Mathematics,
Graduate School of Information Sciences,
Tohoku University, Sendai 980--8579, Japan.
email: {\tt mharada@m.tohoku.ac.jp}}
}

\maketitle

\begin{abstract}
We give a complete classification of binary
linear complementary dual codes of lengths up to $13$ and
ternary 
linear complementary dual codes of lengths up to $10$.
\end{abstract}

\section{Introduction}
Let $\FF_q$ denote the finite field of order $q$,
where $q$ is a prime power.
An $[n,k]$ {\em code} $C$ over $\FF_q$ 
is a $k$-dimensional vector subspace of $\FF_q^n$.
A code over $\FF_2$ is called {\em binary} and
a code over $\FF_3$ is called {\em ternary}.
The parameters $n$ and $k$
are called the {\em length} and {\em dimension} of $C$, respectively.
Two $[n,k]$ codes $C$ and $C'$ over $\FF_q$ are 
{\em equivalent}, denoted $C \cong C'$,
if there is an $n \times n$ monomial matrix $P$ over $\FF_q$ with 
$C' = C \cdot P = \{ x P \mid x \in C\}$.  

The {\em dual} code $C^{\perp}$ of an $[n,k]$ code $C$ over $\FF_q$
is defined as
$
C^{\perp}=
\{x \in \FF_q^n \mid x \cdot y = 0 \text{ for all } y \in C\},
$
where $x \cdot y$ is the standard inner product.
A code $C$ is called {\em linear complementary dual}
(or a linear code with complementary dual)
if $C \cap C^\perp = \{\0_n\}$, where $\0_n$ denotes the zero vector of length $n$.
We say that such a code is LCD for short.
LCD codes were introduced by Massey~\cite{Massey} and gave an optimum linear
coding solution for the two user binary adder channel.
Recently, much work has been done concerning LCD codes
for both theoretical and practical reasons (see~\cite{mf}, 
\cite{CMTQ2}, \cite{bound}, \cite{HS}
and the references therein).
In particular, we emphasize the recent work by 
Carlet, Mesnager, Tang, Qi and Pellikaan~\cite{CMTQ2}.
It has been shown in~\cite{CMTQ2} that
any code over $\FF_q$ is equivalent to some LCD code
for $q \ge 4$.
This motivates us to study binary LCD codes and ternary LCD codes.
In addition, 
recently, Carlet, Mesnager, Tang and Qi~\cite{mf} have established 
the mass formulas.
This motivates us to start a classification of binary LCD codes and 
ternary LCD codes.
The aim of this note is to
give a complete classification of binary
linear complementary dual codes of lengths up to $13$ and
ternary 
linear complementary dual codes of lengths up to $10$.

The note is organized as follows.
In Section~\ref{Sec:2},
definitions, notations and basic results are given.
We give some observation on the classification of
binary LCD codes $C$ and
ternary LCD codes $C$ with $d(C^\perp)=1$.
Section~\ref{Sec:2} also presents the mass formulas given in~\cite{mf}
for binary LCD codes and ternary LCD codes.
The mass formulas are an important role in the classification of 
binary LCD codes and ternary LCD codes.
In Section~\ref{Sec:sd}, 
we give a complete classification of 
binary LCD $[n,k]$ codes and 
ternary LCD $[n,k]$ codes for $k=1,n-1$.
In Section~\ref{Sec:B}, we give a complete classification of
binary LCD codes of lengths up to $13$.
In Section~\ref{Sec:T}, we give a complete classification of
ternary LCD codes of lengths up to $10$.

\section{Preliminaries}
\label{Sec:2}

\subsection{Definitions, notations and basic results}


Let $C$ be an $[n,k]$ code over $\FF_q$.  
The {\em weight} $\wt(x)$ of a vector $x \in \FF_q^n$ is
the number of non-zero components of $x$.
A vector of $C$ is called a {\em codeword} of $C$.
The minimum non-zero weight of all codewords in $C$ is called
the {\em minimum weight} $d(C)$ of $C$ and an $[n,k]$ code with minimum
weight $d$ is called an $[n,k,d]$ code.
The {\em weight enumerator} of $C$ is given by
$\sum_{i=0}^n A_i y^i$, where
$A_i$ is the number of codewords of weight $i$ in $C$.
An {\em automorphism} of $C$ is an $n \times n$ monomial matrix $P$ 
over $\FF_q$ with $C = C \cdot P$.  
The set consisting of all automorphisms of $C$ is called the
{\em automorphism group} of $C$ and it is denoted by $\Aut(C)$.
A {\em generator matrix} of $C$
is a $k \times n$ matrix whose rows are a set of basis vectors of $C$.
A {\em parity-check matrix} of $C$ is a generator matrix of $C^\perp$.

Throughout this note, we use the following notations.
Let $\0_{n}$ denote the zero vector of length $n$ and
let $\1_{n}$ denote the all-one vector of length $n$.
Let $I_n$ denote the identity matrix of order $n$ and
let $A^T$ denote the transpose of a matrix $A$.

The following characterization is due to Massey~\cite{Massey}.

\begin{prop}
Let $C$ be a code over $\FF_q$.  
Let $G$ and $H$ be a generator matrix and a parity-check
matrix of $C$, respectively.
Then the following properties are equivalent:
\begin{itemize}
\item[\rm (i)] $C$ is LCD,
\item[\rm (ii)] $C^\perp$ is LCD,
\item[\rm (iii)] $G G^T$ is nonsingular,
\item[\rm (iv)] $H H^T$ is nonsingular.
\end{itemize}
\end{prop}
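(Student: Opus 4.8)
The plan is to prove the equivalence by establishing (i) $\Leftrightarrow$ (iii), then (iii) $\Leftrightarrow$ (iv) using the symmetry between a code and its dual, and finally noting that (iv) applied to $C$ is exactly (iii) applied to $C^\perp$, which gives (i) $\Leftrightarrow$ (ii). The central object is the linear map associated with $GG^T$, and the key observation I would exploit is that $C \cap C^\perp$ is precisely the set of codewords $x = vG \in C$ (for a coefficient vector $v \in \FF_q^k$) that are orthogonal to every row of $G$, i.e.\ that satisfy $G x^T = \0$.

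First I would show (iii) $\Rightarrow$ (i) by contraposition, or directly. Suppose $x \in C \cap C^\perp$. Since $x \in C$, write $x = vG$ for some $v \in \FF_q^k$. Since $x \in C^\perp$, the vector $x$ is orthogonal to every codeword of $C$, in particular to each of the $k$ rows of $G$; this says exactly $G x^T = \0$, equivalently $G(vG)^T = G G^T v^T = \0$. If $GG^T$ is nonsingular then $v^T = \0$, hence $x = \0_n$, so $C \cap C^\perp = \{\0_n\}$ and $C$ is LCD. Conversely, for (i) $\Rightarrow$ (iii), suppose $GG^T$ is singular, so there is a nonzero $v \in \FF_q^k$ with $GG^T v^T = \0$. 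Then the codeword $x = vG$ satisfies $G x^T = \0$, meaning $x$ is orthogonal to all rows of $G$ and hence to all of $C$, so $x \in C^\perp$. Moreover $x \neq \0_n$ because the rows of $G$ are linearly independent and $v \neq \0$. Thus $x$ is a nonzero element of $C \cap C^\perp$, contradicting (i). This settles (i) $\Leftrightarrow$ (iii).

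Next, I would observe that the argument just given is stated purely in terms of a code and one of its generator matrices: for \emph{any} code $D$ with generator matrix $M$, we have $D$ is LCD if and only if $MM^T$ is nonsingular. Applying this to $D = C^\perp$ with $M = H$ gives immediately that $C^\perp$ is LCD $\Leftrightarrow$ $HH^T$ is nonsingular, which is (ii) $\Leftrightarrow$ (iv). It remains to connect the two halves. For (i) $\Leftrightarrow$ (ii) I would use the elementary fact that $(C^\perp)^\perp = C$ over a finite field (dimension count plus the containment $C \subseteq (C^\perp)^\perp$), so that $C \cap C^\perp = \{\0_n\}$ is a condition symmetric in $C$ and $C^\perp$; hence $C$ is LCD exactly when $C^\perp$ is LCD. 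Chaining these gives all four equivalences.

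I do not expect a serious obstacle here, as each implication is a short linear-algebra argument. The one point requiring a little care is the nondegeneracy step: over a general field $\FF_q$ the bilinear form $x \cdot y$ is nondegenerate, which is what guarantees both $(C^\perp)^\perp = C$ and that $x \perp \text{(all rows of } G)$ forces $x \perp C$. I would make sure to invoke that the standard inner product is a nondegenerate symmetric form so these identifications are valid; the characteristic-$2$ or self-orthogonality subtleties that sometimes complicate such arguments do not affect these particular equivalences, since we never need $GG^T$ to be \emph{alternating} or to have special diagonal behavior—only its rank matters.
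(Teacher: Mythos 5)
Your proof is correct and complete. Note that the paper itself gives no proof of this proposition---it is stated as a known characterization and attributed to Massey's 1992 paper---so there is nothing internal to compare against; your argument (the equivalence (i)$\Leftrightarrow$(iii) via the observation that $x=vG\in C\cap C^\perp$ iff $GG^Tv^T=\0$, then applying the same statement to $C^\perp$ with generator matrix $H$, and using $(C^\perp)^\perp=C$ to link the two halves) is precisely the standard proof from Massey's original paper. One tiny quibble: the step ``$x$ orthogonal to all rows of $G$ forces $x\perp C$'' needs only bilinearity and the fact that the rows span $C$, not nondegeneracy of the form; nondegeneracy is what you need for $(C^\perp)^\perp=C$, and you invoke it correctly there.
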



The following proposition is trivial.

\begin{prop}
Suppose that $C$ is an LCD code over $\FF_q$ and $q \in \{2,3\}$.
If $C'$ is equivalent to $C$, then $C'$ is also LCD.
\end{prop}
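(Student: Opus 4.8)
The plan is to reduce the claim to the invariance of the Gram matrix $G G^T$ under a change of generator matrix induced by a monomial matrix, and then to apply Massey's characterization above, namely that a code is LCD if and only if $G G^T$ is nonsingular (the equivalence (i)$\Leftrightarrow$(iii)).

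First I would unwind the definition of equivalence. Since $C' \cong C$, there is an $n \times n$ monomial matrix $P$ over $\FF_q$ with $C' = C \cdot P$. If $G$ is a generator matrix of $C$, then $GP$ is a generator matrix of $C'$, because right multiplication by an invertible matrix carries a basis of $C$ to a basis of $C \cdot P$.

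The key step, and the only place where the hypothesis $q \in \{2,3\}$ enters, is to show that every monomial matrix $P$ over $\FF_2$ or $\FF_3$ is orthogonal, that is, $P P^T = I_n$. Writing $P = D Q$, where $D$ is the diagonal matrix collecting the nonzero scalars of $P$ and $Q$ is a permutation matrix, one computes $P P^T = D Q Q^T D^T = D D = D^2$, the diagonal matrix of the squares of those scalars. For $q = 2$ the only nonzero scalar is $1$, and for $q = 3$ the nonzero scalars are $1$ and $2 = -1$; in every case the square equals $1$, so $P P^T = I_n$. This is exactly the fact that every nonzero element of $\FF_q$ squares to $1$ when $q \le 3$, which is precisely why the argument would fail for larger $q$.

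Finally I would compute $(GP)(GP)^T = G P P^T G^T = G I_n G^T = G G^T$, so the Gram matrix of the generator matrix $GP$ of $C'$ coincides with the Gram matrix $G G^T$ of $C$. Since $C$ is LCD, $G G^T$ is nonsingular by Massey's characterization, hence $(GP)(GP)^T$ is nonsingular, and applying the same characterization to $C'$ shows that $C'$ is LCD. I expect no real obstacle here; the whole content of the proposition is the orthogonality $P P^T = I_n$, which is why the authors call it trivial. The only point worth recording carefully is that this orthogonality is exactly what restricts the statement to $q \in \{2,3\}$.
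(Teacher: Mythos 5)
Your proof is correct. The paper offers no argument at all for this proposition---it simply declares it trivial---so there is nothing to compare against in detail; your route through Massey's criterion, together with the observation that a monomial matrix $P$ over $\FF_2$ or $\FF_3$ satisfies $PP^T=I_n$ because every nonzero element of these fields squares to $1$ (hence $(GP)(GP)^T=GG^T$), is exactly the intended ``trivial'' argument, and you correctly identify that this orthogonality is what breaks down for $q\ge 4$, consistent with the cited result that every code over $\FF_q$, $q\ge 4$, is equivalent to an LCD code.
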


Throughout this note, we use the following notations.
Let $\BB_{n,k}$ denote the set of all inequivalent
binary LCD $[n,k]$ codes.
Let $\BB_{n,k,d}$ denote the set of all inequivalent
binary LCD $[n,k,d]$ codes.
Let $\CC_{n,k}$ denote the set of all inequivalent
ternary LCD $[n,k]$ codes.
Let $\CC_{n,k,d}$ denote the set of all inequivalent
ternary LCD $[n,k,d]$ codes.

\subsection{LCD codes $C$ with $d(C^\perp)=1$}
\label{subsec:dd1}

Let $C$ be an $[n,k,d]$ code over $\FF_q $.
Define the following $[n+1,k,d]$ code:
\begin{equation}\label{eq:extend}
C^* = \{(x_1,x_2,\ldots,x_{n},0) \mid (x_1,x_2,\ldots,x_{n}) \in C\}. 
\end{equation}
Let $D$ be an $[n+1,k,d]$ code over $\FF_q $ with $d(D^\perp)=1$.
It is easy to see that there is an $[n,k,d]$ code $C$ over $\FF_q$
with $D \cong C^*$.
It is trivial that
$C^*$ is LCD if and only if $C$ is LCD. 
%
%
In this way, every LCD $[n+1,k,d]$ code $D$ over $\FF_q$ 
with $d(D^\perp)=1$
is constructed from some LCD $[n,k,d]$ code $C$  over $\FF_q$.
In addition, two LCD $[n+1,k,d]$ codes $D$ over $\FF_q$ with $d(D^\perp)=1$
are equivalent if and only if 
two LCD $[n,k,d]$ codes $C$  over $\FF_q$ are equivalent.
Hence, all LCD $[n+1,k,d]$ codes $D$ over $\FF_q$ with $d(D^\perp)=1$,
which must be checked to achieve
a complete classification, can be obtained from 
all inequivalent LCD $[n,k,d]$ codes $C$  over $\FF_q$.
Therefore, we have the following:

\begin{prop}\label{prop:dd1}
Let $\BB^*_{n,k}$ denote the set of all inequivalent
binary LCD $[n,k]$ codes $B$ with $d(B^\perp)=1$.
Let $\CC^*_{n,k}$ denote the set of all inequivalent
ternary LCD $[n,k]$ codes $C$ with $d(C^\perp)=1$.
\begin{itemize}
\item[\rm (i)] 
There is a set $\BB_{n-1,k}$ of all inequivalent
binary LCD $[n-1,k]$ codes such that
$\BB^*_{n,k} =\{B^* \mid B \in \BB_{n-1,k}\}$.
\item[\rm (ii)] 
There is a set $\CC_{n-1,k}$ of all inequivalent
ternary LCD $[n-1,k]$ codes 
such that $\CC^*_{n,k} =\{C^* \mid C \in \CC_{n-1,k}\}$.
\end{itemize} 
\end{prop}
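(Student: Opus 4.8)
The plan is to prove that the extension map $C \mapsto C^*$ of~\eqref{eq:extend} induces a bijection between the equivalence classes of LCD $[n-1,k]$ codes and the equivalence classes of LCD $[n,k]$ codes $B$ with $d(B^\perp)=1$, over a fixed $\FF_q$ with $q\in\{2,3\}$; statements (i) and (ii) then follow by transporting a set of representatives through this bijection. I first record the properties of $*$ that make it well defined on classes. If $G$ is a generator matrix of $C$, then $[\,G \mid \0\,]$ is a generator matrix of $C^*$ and $[\,G \mid \0\,][\,G \mid \0\,]^T = G G^T$, so by the equivalence of (i) and (iii) in Massey's characterization $C^*$ is LCD exactly when $C$ is, while $x \mapsto (x,0)$ shows $\dim C^* = \dim C$. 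The last coordinate of $C^*$ is identically zero, so $(C^*)^\perp$ contains a weight-one vector and $d((C^*)^\perp)=1$. Finally, if $C \cong C'$ via a monomial matrix $P$ of order $n-1$, the block matrix $\left(\begin{smallmatrix} P & 0 \\ 0 & 1 \end{smallmatrix}\right)$ is monomial and sends $C^*$ to $C'^*$; hence $*$ carries LCD $[n-1,k]$ codes to LCD $[n,k]$ codes with dual minimum weight $1$ and respects equivalence.

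For surjectivity I start from an LCD $[n,k]$ code $B$ with $d(B^\perp)=1$. A weight-one codeword $\lambda e_i \in B^\perp$ forces $x_i = 0$ for every $x \in B$, so coordinate $i$ is identically zero on $B$. Transposing coordinates $i$ and $n$ is a monomial equivalence showing $B \cong C^*$, where $C$ is $B$ punctured at coordinate $i$; as equivalence preserves the LCD property, $C$ is an LCD $[n-1,k]$ code. Thus every target class lies in the image of $*$.

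The main obstacle is injectivity: from $C^* \cong C'^*$ I must recover $C \cong C'$, the difficulty being that the connecting monomial matrix need not fix the appended coordinate. The key point is that any two identically-zero coordinates of a code can be interchanged by a monomial automorphism, so that deleting a zero coordinate is well defined up to equivalence. Precisely, I would establish the following: if $E_1 \cong E_2$ and $i_1, i_2$ are identically-zero coordinates of $E_1, E_2$, then the codes obtained by deleting $i_1$ from $E_1$ and $i_2$ from $E_2$ are equivalent. Indeed, a monomial matrix $Q$ with $E_2 = E_1 Q$ sends zero coordinates to zero coordinates; deleting $i_1$ together with its image gives a monomial equivalence of the two punctured codes, and a further transposition of the image of $i_1$ with $i_2$ (both zero coordinates of $E_2$) completes the argument. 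Applying this with $E_1 = C^*$, $E_2 = C'^*$ and $i_1 = i_2 = n$ yields $C \cong C'$.

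Combining these steps, $\Phi \colon [C] \mapsto [C^*]$ is a bijection from the classes of LCD $[n-1,k]$ codes onto the classes of LCD $[n,k]$ codes with $d(B^\perp)=1$. Consequently, taking $\BB_{n-1,k}$ to be any set of representatives of the former classes, the set $\{B^* \mid B \in \BB_{n-1,k}\}$ meets each of the latter classes exactly once and hence serves as $\BB^*_{n,k}$, giving (i); part (ii) is the same argument over $\FF_3$.
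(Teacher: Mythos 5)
Your proof is correct and follows essentially the same route as the paper, which establishes Proposition~\ref{prop:dd1} via the preceding discussion in Section~\ref{subsec:dd1}: the extension $C\mapsto C^*$ preserves the LCD property, every code $D$ with $d(D^\perp)=1$ arises as some $C^*$ up to equivalence, and equivalence of the extended codes matches equivalence of the original ones. The only difference is that you supply the details (notably the injectivity step, that puncturing at an identically-zero coordinate is well defined up to equivalence) that the paper merely asserts as ``easy to see.''
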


\subsection{Mass formulas}

It is trivial that $\{\0_n\}$ and $\FF_q^n$ are 
the unique LCD $[n,1]$ code over $\FF_q$ and
the unique LCD $[n,n]$ code over $\FF_q$, respectively.
From now on, we assume that 
\[
1 \le k \le n-1
\]
for an LCD  $[n,k]$ code over $\FF_q$.


Let $\mathcal{B}_{n,k}$ denote the set of all inequivalent
binary LCD $[n,k]$ codes.
Using the Gaussian binomial coefficients, 
the following values
\begin{equation}\label{eq:mf}
T_{2}(n,k) = \sum_{B \in \mathcal{B}_{n,k}} \frac{n!}{|\Aut(B)|}
\end{equation}
were determined theoretically in~\cite[Corollary~4.8]{mf}, without
finding the set $\mathcal{B}_{n,k}$, as follows:
\[
T_{2}(n,k)=
\begin{cases}
2^{\frac{nk-k^2+n-1}{2}}
\left[\frac{n}{2}-1\atop \frac{k-1}{2}\right]_4
&\text { if $n$ is even and $k$ is odd,}
\\
2^{\frac{(n-k)(k+1)}{2}}
\left[\frac{n-1}{2} \atop \frac{k-1}{2}\right]_4
&\text { if $n$ and $k$ are odd,}
\\
2^{\frac{k(n-k+1)}{2}}
\left[\frac{n-1}{2} \atop \frac{k}{2}\right]_4
&\text { if $n$ is odd and $k$ is even,}
\\
2^{\frac{k(n-k)}{2}}
\left(2^{n-k}
\left[\frac{n}{2}-1 \atop \frac{k}{2}-1\right]_4
+
\left[\frac{n}{2}-1 \atop \frac{k}{2}\right]_4
\right)
&\text { if $n$ and $k$ are even,}
\end{cases}
\]
where
\[
\left[n\atop k\right]_q
=\frac{(q^n-1)(q^{n-1}-1)\cdots(q^{n-k+1}-1)}
{(q-1)(q^2-1) \cdots (q^k-1)} \text{ if }k \ne 0,
\]
and $\left[n \atop 0\right]_q=1$.
The value $T_2(n,k)$ is the number of the distinct binary LCD
$[n,k]$ codes.

Let $\CC_{n,k}$ denote the set of all inequivalent
ternary LCD $[n,k]$ codes.
Similar to the above,
the following values
\begin{equation}\label{eq:mf3}
T_{3}(n,k) = \sum_{C \in \mathcal{C}_{n,k}} \frac{2^n n!}{|\Aut(C)|}
\end{equation}
were determined theoretically in~\cite[Corollary~5.9]{mf}, without
finding the set $\mathcal{C}_{n,k}$, as follows:
\[
T_{3}(n,k)=
\begin{cases}
3^{\frac{nk-k^2-1}{2}} 3^{\frac{n}{2}-1} 
\left[ \frac{n}{2}-1 \atop \frac{k-1}{2}  \right]_9
&\text { if $n \equiv 0 \pmod 4$ and $k$ is odd,}
\\
3^{\frac{nk-k^2-1}{2}} 3^{\frac{n}{2}+1} 
\left[ \frac{n}{2}-1\atop   \frac{k-1}{2}\right]_9
&\text { if $n \equiv 2 \pmod 4$ and $k$ is odd,}
\\
3^{\frac{(k+1)(n-k)}{2}}  
\left[ \frac{n-1}{2}  \atop \frac{k-1}{2} \right]_9
&\text { if $n$ is odd and $k$ is odd,}
\\
3^{\frac{k(n-k+1)}{2}} 
\left[  \frac{n-1}{2} \atop\frac{k}{2}  \right]_9
&\text { if $n$ is odd and $k$ is even,}
\\
3^{\frac{k(n-k)}{2}} 
\left[ \frac{n}{2}\atop  \frac{k}{2} \right]_9
&\text { if $n$ is even and $k$ is even.}
\end{cases}
\]
The value $T_3(n,k)$ is the number of the distinct ternary LCD
$[n,k]$ codes.
The equations~\eqref{eq:mf} and~\eqref{eq:mf3}
are called the {\em mass formulas}.

\section{Classification for dimensions $1$ and $n-1$}
\label{Sec:sd}

In this section,
we give a complete classification of 
binary LCD $[n,k]$ codes and 
ternary LCD $[n,k]$ codes for $k=1,n-1$.

\begin{prop}\label{prop:dim1B}
Let $\BB_{n,k}$ denote the set of all inequivalent
binary LCD $[n,k]$ codes.
Let $\BB_{n,k,d}$ denote the set of all inequivalent
binary LCD $[n,k,d]$ codes.
\begin{itemize}
\item[\rm (i)] 
$
|\BB_{n,1,d}|=
\begin{cases}
1 & \text{ if $n$ is even and $d=1,3,5,\ldots,n-1$,} \\
1 & \text{ if $n$ is odd and $d=1,3,5,\ldots,n$,}  \\
0 & \text{ otherwise.}
\end{cases}
$
\item[\rm (ii)] 
$
|\BB_{n,1}|=
|\BB_{n,n-1}|=
\begin{cases}
\frac{n}{2} & \text{ if $n$ is even,} \\
\frac{n+1}{2} & \text{ if $n$ is odd.} 
\end{cases}
$
\item[\rm (iii)] 
$
|\BB_{n,n-1,d}|=
\begin{cases}
\frac{n}{2} & \text{ if $n$ is even and $d=1$,} \\
\frac{n-1}{2} & \text{ if $n$ is odd and $d=1$,} \\
1 & \text{ if $n$ is odd and $d=2$,}  \\
0 & \text{ otherwise.}
\end{cases}
$
\end{itemize}
\end{prop}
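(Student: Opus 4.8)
The plan is to reduce everything to the one-dimensional case, where a code is determined up to equivalence by a single numerical invariant, and then transport the result to dimension $n-1$ by duality.

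First I would analyze binary LCD $[n,1]$ codes directly. Such a code has a generator matrix $G=(v)$ consisting of a single nonzero vector $v\in\FF_2^n$, so $C=\{\0_n,v\}$ and $d(C)=\wt(v)$. Over $\FF_2$ every monomial matrix is a permutation matrix, hence two such codes are equivalent if and only if their generators are permutations of one another, i.e.\ if and only if they have the same weight. Thus the equivalence classes of binary $[n,1]$ codes correspond bijectively to the possible weights $w\in\{1,2,\dots,n\}$. For the LCD property, Proposition~1(iii) says $C$ is LCD if and only if $GG^T=v\cdot v=\sum_i v_i$ is nonzero in $\FF_2$ (using $v_i^2=v_i$), that is, if and only if $\wt(v)$ is odd. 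Combining these, for each odd $w\in\{1,\dots,n\}$ there is exactly one inequivalent binary LCD $[n,1,w]$ code, and none for even $w$; this is part~(i), and summing over $d$ (counting the odd values in $\{1,\dots,n\}$) gives the stated value of $|\BB_{n,1}|$ in part~(ii).

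Next I would obtain the dimension $n-1$ counts by duality. The map $C\mapsto C^\perp$ sends each $[n,k]$ code to an $[n,n-k]$ code, preserves equivalence, and by Proposition~1 preserves the LCD property; hence it induces a bijection $\BB_{n,1}\to\BB_{n,n-1}$, giving $|\BB_{n,n-1}|=|\BB_{n,1}|$ and completing part~(ii). For part~(iii) I would read off the minimum weight of the $[n,n-1]$ code $C^\perp$ through this correspondence. Writing $C=\{\0_n,v\}$ with $w:=\wt(v)$ odd, a nonzero $x$ lies in $C^\perp=\{x:x\cdot v=0\}$ exactly when $\supp(x)$ meets $\supp(v)$ in an even number of coordinates. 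If $\supp(v)\ne\{1,\dots,n\}$ there is a coordinate outside $\supp(v)$, and the corresponding weight-one vector lies in $C^\perp$, so $d(C^\perp)=1$; if instead $v=\1_n$, then $C^\perp$ is the even-weight code and $d(C^\perp)=2$.

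The only step requiring care is deciding when the full-support case $v=\1_n$ occurs, since it is the unique situation forcing $d(C^\perp)=2$. Because $\wt(v)=n$ must also be odd for $C$ to be LCD, this case arises precisely when $n$ is odd, where it contributes exactly one code with $d=2$ while the remaining $(n-1)/2$ LCD codes have $d=1$; when $n$ is even the full-support generator is incompatible with the parity constraint, so all $n/2$ codes have $d=1$. Partitioning the count from part~(ii) accordingly yields the values in part~(iii). All computations here are elementary, and the main obstacle is simply this bookkeeping of the parity condition against the $v=\1_n$ boundary case.
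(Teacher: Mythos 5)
Your proof is correct and follows essentially the same route as the paper: classify binary $[n,1]$ codes up to equivalence by the weight of the generator, characterize the LCD ones by the parity of that weight (you cite Massey's criterion $GG^T\ne 0$ where the paper just calls it trivial), and transfer to dimension $n-1$ by duality, with $v=\1_n$ (the paper's $B_n(\1_{n-1})$) as the unique case giving dual minimum weight $2$. The only cosmetic difference is that you work with the full generator $v$ rather than the normalized form $\left(\begin{array}{cc}1 & a\end{array}\right)$, which merely shifts the weight parameter by one.
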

\begin{proof}
By considering a permutation of the coordinates, 
we may assume without loss of generality that 
a binary $[n,1]$ code has generator matrix 
of the form
$\left(
\begin{array}{cc}
1 & a
\end{array}
\right)
$,
where $a=(a_1,a_2,\ldots,a_{n-1}) \in \FF_2^{n-1}$.
We denote the code by $B(a)$.
It is trivial that
$B_n(a)$ is LCD if and only if $\wt(a)$ is even.
If $\wt(a) = \wt(b)$, then it is trivial that
$B_n(a)$ and $B_n(b)$ are equivalent.
If $\wt(a) \ne \wt(b)$, then
$B_n(a)$ and $B_n(b)$ are inequivalent 
since $B_n(a)$ has weight enumerator $1 + y^{1+\wt(a)}$.
Hence, $B_n(a)$ and $B_n(b)$ are inequivalent if and only if
$\wt(a) \ne \wt(b)$.
The first two parts (i) and (ii) follow.

If $n$ is odd, then $B_n(\1_{n-1})^\perp$ is the unique
binary LCD $[n,n-1,2]$ code.
If $\wt(a)$ is even and $a \ne \1_{n-1}$, 
then $B_n(a)^\perp$ is a binary LCD $[n,n-1,1]$ code.
The last part (iii) follows from (ii).
\end{proof}

We give a complete classification of ternary LCD
$[n,k]$ codes for $k=1,n-1$.

\begin{prop}\label{prop:dim1T}
Let $\CC_{n,k}$ denote the set of all inequivalent
ternary LCD $[n,k]$ codes.
Let $\CC_{n,k,d}$ denote the set of all inequivalent
ternary LCD $[n,k,d]$ codes.
\begin{itemize}
\item[\rm (i)] 
$
|\CC_{n,1,d}|=
\begin{cases}
1 & \text{ if $d \not\equiv 0 \pmod 3$,} \\
0 & \text{ otherwise.}
\end{cases}
$\item[\rm (ii)] 
$
|\CC_{n,1}|=
|\CC_{n,n-1}|=
\begin{cases}
\frac{2n}{3}   & \text{ if $n \equiv 0 \pmod 3$,} \\
\frac{2n+1}{3} & \text{ if $n \equiv 1 \pmod 3$,} \\
\frac{2n+2}{3} & \text{ if $n \equiv 2 \pmod 3$.} \\
\end{cases}
$
\item[\rm (iii)] 
$
|\CC_{n,n-1,d}|=
\begin{cases}
\frac{2n}{3}   & \text{ if $n \equiv 0 \pmod 3$ and $d=1$,} \\
\frac{2n+1}{3}-1 & \text{ if $n \equiv 1 \pmod 3$ and $d=1$,} \\
1 & \text{ if $n \equiv 1 \pmod 3$ and $d=2$,} \\
\frac{2n+2}{3}-1 & \text{ if $n \equiv 2 \pmod 3$ and $d=1$,} \\
1 & \text{ if $n \equiv 2 \pmod 3$ and $d=2$,} \\
0 & \text{ otherwise.}
\end{cases}
$
\end{itemize}
\end{prop}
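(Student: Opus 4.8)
The plan is to mirror the argument for the binary case in Proposition~\ref{prop:dim1B}, replacing the parity condition ``$\wt$ even'' by the divisibility condition ``$\wt\not\equiv 0\pmod 3$'' that governs when a one-dimensional ternary code is LCD. First I would reduce a ternary $[n,1]$ code to a single generator vector $g\in\FF_3^n$ and extract the LCD condition directly from Massey's characterization (nonsingularity of $GG^T$): here $G=g$ is a single row, so $GG^T=[g\cdot g]$, and since $g_i^2=1$ for every nonzero entry over $\FF_3$, we get $g\cdot g\equiv\wt(g)\pmod 3$. Hence the code $\langle g\rangle$ is LCD if and only if $\wt(g)\not\equiv 0\pmod 3$.

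Next I would settle the equivalence bookkeeping. A monomial transformation over $\FF_3$ permutes coordinates and rescales each by $\pm1$, so it preserves weight, and conversely any two vectors of equal weight are related by such a transformation. The nonzero codewords of $\langle g\rangle$ are $g$ and $2g$, both of weight $\wt(g)$, so the weight enumerator is $1+2y^{\wt(g)}$ and $d(\langle g\rangle)=\wt(g)$. Therefore two one-dimensional codes are equivalent exactly when their generators have equal weight, and there is precisely one equivalence class for each attainable weight. This gives part~(i): a ternary LCD $[n,1,d]$ code exists and is unique up to equivalence iff $1\le d\le n$ and $d\not\equiv 0\pmod 3$. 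Counting these $d$ yields $n-\lfloor n/3\rfloor$, which evaluates to the three stated cases for $|\CC_{n,1}|$ in part~(ii); the equality $|\CC_{n,n-1}|=|\CC_{n,1}|$ follows because dualizing is an equivalence-preserving bijection sending LCD codes to LCD codes.

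For part~(iii) I would read off $d(C^\perp)$ for $C=\langle g\rangle$. A weight-one vector lies in $C^\perp$ exactly when $g$ vanishes in some coordinate, so $d(C^\perp)=1$ iff $\wt(g)<n$; and when $\wt(g)=n$ the vector supported on two positions $i,j$ with values $g_j$ and $-g_i$ is orthogonal to $g$, giving $d(C^\perp)=2$. Thus $d(C^\perp)\in\{1,2\}$ always, and minimum weight $2$ in the dual arises only from the full-support generator, which is itself LCD iff $n\not\equiv 0\pmod 3$. Consequently there is exactly one LCD $[n,n-1,2]$ code precisely when $n\not\equiv 0\pmod 3$, while all remaining codes counted in part~(ii) have $d(C^\perp)=1$; subtracting gives the stated values.

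The main obstacle I anticipate is not a single hard step but keeping the $n\bmod 3$ case analysis consistent across the three parts: I must verify that the full-support generator, the unique source of minimum weight $2$ in the dual, is LCD exactly when $n\not\equiv 0\pmod 3$, so that the ``$d=2$'' row appears for $n\equiv 1,2$ and vanishes for $n\equiv 0$, and that the count $n-\lfloor n/3\rfloor$ reproduces the closed forms in part~(ii).
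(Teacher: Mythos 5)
Your proposal is correct and follows essentially the same route as the paper: normalize a one-dimensional generator up to monomial equivalence, observe that $g \cdot g \equiv \wt(g) \pmod 3$ so the code is LCD iff $\wt(g)\not\equiv 0 \pmod 3$, use the weight enumerator to show weight is a complete equivalence invariant, and identify the full-support generator as the unique source of dual minimum weight $2$. The only cosmetic difference is that the paper normalizes the first coordinate to $1$ and works with the tail vector $a$ of weight $d-1$, while you work with the full generator $g$ of weight $d$ and spell out the weight-two dual codeword explicitly, which the paper leaves implicit.
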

\begin{proof}
By considering equivalent codes,
we may assume without loss of generality that 
a ternary $[n,1]$  code has generator matrix 
of the form
$\left(
\begin{array}{cc}
1 & a
\end{array}
\right)
$,
where $a=(a_1,a_2,\ldots,a_{n-1}) \in \FF_3^{n-1}$.
We denote this code by $C_n(a)$.
It is trivial that
$C_n(a)$ is LCD if and only if $\wt(a) \not\equiv 2 \pmod 3$.
If $\wt(a) = \wt(b)$, then it is trivial that
$C_n(a)$ and $C_n(b)$ are equivalent.
If $\wt(a) \ne \wt(b)$, then
$C_n(a)$ and $C_n(b)$ are inequivalent 
since $C_n(a)$ has weight enumerator $1 + 2y^{1+\wt(a)}$.
Hence, $C_n(a)$ and $C_n(b)$ are inequivalent if and only if
$\wt(a) \ne \wt(b)$.
The first two parts (i) and (ii) follow.

If $n \not\equiv 0 \pmod 3$, then $C_n(\1_{n-1})^\perp$ is the unique
ternary LCD $[n,n-1,2]$ code.
If $\wt(a) \not\equiv 2 \pmod 3$ and $a \ne \1_{n-1}$, 
then $C_n(a)^\perp$ is a ternary LCD $[n,n-1,1]$ code.
The last part (iii) follows from (ii).
\end{proof}

\section{Classification of binary LCD codes of lengths up to 13}
\label{Sec:B}

In this section, we give a complete classification of
binary LCD codes of lengths up to $13$.
It is sufficient to consider $k \le n/2$,
since the dual code of a binary  LCD code is also LCD.

We describe how to complete a classification of
binary LCD codes of lengths up to $13$.
Every binary LCD $[n,k]$ code is equivalent to a binary
code with generator matrix of the form
$\left(
\begin{array}{cc}
I_k & A
\end{array}
\right)
$,
where $A$ is a $k \times (n-k)$ matrix.
The set of matrices $A$ was constructed, row by row.
Permuting the rows and columns of $A$ gives rise to different
generator matrices which generate equivalent binary codes.
Here, we consider some natural (lexicographical) order $<$ on
the set of vectors of length $n-k$.
We consider only matrices $A$, satisfying
the condition $r_1 \le r_2 \le \cdots \le r_k$,
where $r_i$ is the $i$-th row of $A$.
It is obvious that all binary codes, which must be checked to achieve
a complete classification, can be obtained.
By this method, we found all distinct binary LCD $[n,k]$ codes,
which must be checked to achieve
a complete classification for $2 \le k \le n/2 \le 11/2$.
By determining the equivalence or inequivalence 
for a given pair of binary codes, we obtained the set $\BB_{n,k}$
of all inequivalent binary LCD $[n,k]$ codes.
The mass formula~\eqref{eq:mf} shows that there is no
other binary LCD $[n,k]$ code.
This computation was performed in {\sc Magma}~\cite{Magma}. 
In principle, such a computation can be done by classifying 
binary LCD codes by the {\sc Magma} function {\tt IsIsomorphic}, 
then their automorphism groups can be calculated by 
{\tt AutomorphismGroup}.

For $n=12$ and $13$, the mass formula~\eqref{eq:mf} was
used to complete the classification, 
due to the computational complexity.
Let $\BB^*_{n,k}$ denote the set of all inequivalent
binary LCD $[n,k]$ codes $B$ with $d(B^\perp)=1$.
By Proposition~\ref{prop:dd1},
there is a set $\BB_{n-1,k}$ of  all inequivalent
binary LCD $[n-1,k]$ codes such that
$\BB^*_{n,k} =\{B^* \mid B \in \BB_{n-1,k}\}$,
where the construction of $B^*$ is listed in~\eqref{eq:extend}.
Note that $\BB^*_{n,n/2}$ is constructed by considering
the dual codes of the all inequivalent
binary LCD $[n-1,n/2-1]$ codes for the case $n=2k$.
Hence, it is sufficient to find 
all binary LCD $[n,k]$ codes $B$  with $d(B^\perp)\ge 2$,
which must be checked further for equivalences.
For these lengths,
the set of matrices $A$ was constructed, column by column.
Here, we consider some natural (lexicographical) order $<$ on
the set of non-zero vectors of length $k$.
We consider only matrices $A$, satisfying
the condition $c_1 \le c_2 \le \cdots \le c_{n-k}$,
where $c_i$ is the $i$-th column of $A$.
In this way, by adding new binary LCD codes,
we continued to construct the set $\BB'_{n,k}$
of inequivalent binary LCD $[n,k]$ codes until
$\sum_{B \in \mathcal{B}'_{n,k}} \frac{n!}{|\Aut(B)|}$
reaches the value $T_{2}(n,k)$.
When $\sum_{B \in \mathcal{B}'_{n,k}} \frac{n!}{|\Aut(B)|}$
reaches the value $T_{2}(n,k)$,
the classification was completed and $\mathcal{B}_{n,k}$ was obtained.

As a check, 
in order to verify that $\BB_{n,k}$ contains no pair of equivalent 
binary LCD $[n,k]$ codes and that
$\sum_{B \in \mathcal{B}_{n,k}} \frac{n!}{|\Aut(B)|}=T_{2}(n,k)$,
we employed the package GUAVA~\cite{guava} of GAP~\cite{gap}.
This calculation was done by using
the functions {\tt IsEquivalent} and {\tt AutomorphismGroup}.

In order to illustrate our approach, we consider the case 
$(n,k)=(6,3)$ as an example.
Let $B_{6,3,i}$ $(i=1,2,\ldots,8)$ be the binary LCD $[6,3]$ codes
with generator matrices 
$\left(
\begin{array}{cc}
I_3 & M_i
\end{array}
\right)$, 
where $M_i$ are listed in Table~\ref{Tab:C6}.
We verified that these binary codes are inequivalent.
Indeed, the binary codes $B_{6,3,i}$ have weight enumerators $W_{i}$,
where
\[
\begin{array}{ll}
W_1=1+3y^2+y^3+3y^5,&
W_2=1+3y^2+3y^3+y^5,\\
W_3=1+y^2+3y^3+2y^4+y^5,&
W_4=1+3y+3y^2+y^3,\\
W_5=1+y+3y^2+3y^3,&
W_6=1+y+y^2+y^3+2y^4+2y^5,\\
W_7=1+2y+y^2+y^3+2y^4+y^5,&
W_8=1+y+y^2+3y^3+2y^4.
\end{array}
\]
This shows also that these binary codes are inequivalent.
Since 
\[
|\Aut(B_{6,3,i})|
=36, 12,  4, 36, 12, 12, 12, 4\ (i=1,2,\ldots,8),
\]
we have
\[
\sum_{i=1}^8 \frac{6!}{|\Aut(B_{6,3,i})|} = 640
=T_2(6,3).
\]
The mass formula~\eqref{eq:mf}
shows that there is no other binary LCD $[6,3]$ code.

\begin{table}[thb]
\caption{LCD $[6,3]$ codes}
\label{Tab:C6}
\begin{center}
{\small
\begin{tabular}{c|c|c|c|c|c|c|c}
\noalign{\hrule height0.8pt}
$i$ & $M_i$ &$i$ & $M_i$ &
$i$ & $M_i$ &$i$ & $M_i$ \\
\hline
1 &
$\left(\begin{array}{c}
001\\
001\\
110
\end{array}\right)$
&
2&
$\left(\begin{array}{c}
001\\
001\\
011
\end{array}\right)$
&
3&
$\left(\begin{array}{c}
001\\
110\\
111
\end{array}\right)$
&
4 &
$\left(\begin{array}{c}
000\\
000\\
000
\end{array}\right)$
\\
5 &
$\left(\begin{array}{c}
001\\
001\\
000
\end{array}\right)$
&
6 &
$\left(\begin{array}{c}
001\\
111\\
000
\end{array}\right)$
&
7 &
$\left(\begin{array}{c}
110\\
000\\
000
\end{array}\right)$
&
8 &
$\left(\begin{array}{c}
001\\
011\\
000
\end{array}\right)$ \\
\noalign{\hrule height0.8pt}
\end{tabular}
}
\end{center}
\end{table}

For $2 \le k \le n/2$ and $n \le 13$,
we list in Table~\ref{Tab:C}
the numbers $N$ of all inequivalent binary LCD $[n,k]$ codes and
and the numbers $N_d$ of all inequivalent binary LCD $[n,k,d]$ codes.
We also list the numbers $N_{d^\perp}$ of the dual $[n,n-k,d^\perp]$
codes of the inequivalent binary LCD $[n,k]$ codes.
All  binary codes in the table
can be obtained electronically from
\url{http://www.math.is.tohoku.ac.jp/~mharada/LCD2/}.


\begin{table}[thb]
\caption{Smallest automorphism groups}
\label{Tab:AutB}
\begin{center}
{\small
\begin{tabular}{c|c|c|c|c|c|c|c}
\noalign{\hrule height0.8pt}
$(n,k)$ & $\Aut_s$ &$(n,k)$ & $\Aut_s$ &
$(n,k)$ & $\Aut_s$ &$(n,k)$ & $\Aut_s$ \\
\hline
$( 2,1)$& 1&$( 7,1)$& 144&$( 9,3)$&     8&$(11,3)$&   24\\
$( 3,1)$& 2&$( 7,2)$&  12&$( 9,4)$&     4&$(11,4)$&    4\\
$( 4,1)$& 6&$( 7,3)$&   4&$(10,1)$& 14400&$(11,5)$&    2\\
$( 4,2)$& 4&$( 8,1)$& 720&$(10,2)$&   288&$(12,2)$& 2880\\
$( 5,1)$&12&$( 8,2)$&  24&$(10,3)$&    16&$(12,3)$&   48\\
$( 5,2)$& 4&$( 8,3)$&   8&$(10,4)$&     4&$(12,4)$&    4\\
$( 6,1)$&36&$( 8,4)$&   4&$(10,5)$&     2&$(12,5)$&    2\\
$( 6,2)$& 8&$( 9,1)$&2880&$(11,1)$& 86400&$(12,6)$&    1\\
$( 6,3)$& 4&$( 9,2)$&  72&$(11,2)$&   864&&\\
\noalign{\hrule height0.8pt}
\end{tabular}
}
\end{center}
\end{table}

The smallest possible automorphism group of a binary LCD
code is the trivial group (of order $1$).
It is obvious that the unique binary LCD $[2,1]$ code has
trivial automorphism group.
We list in Table~\ref{Tab:AutB} the smallest value $\Aut_s$ 
among the orders of the automorphism groups of binary
LCD codes of lengths up to $12$.
From the table,  we have the following:

\begin{prop}
The smallest length $n >2$ for which there is a binary
LCD code of length $n$ with trivial automorphism group is $12$.
\end{prop}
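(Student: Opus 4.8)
The plan is to deduce the statement from the complete classification of binary LCD codes of lengths up to $13$ carried out in this section, together with the automorphism group orders recorded in Table~\ref{Tab:AutB}. Two things must be established: that some binary LCD code of length $12$ has trivial automorphism group, and that no binary LCD code of length $n$ with $2<n<12$ does. Since a binary monomial matrix is simply a permutation matrix and a permutation matrix $P$ satisfies $PP^T=I_n$, we have $(xP)\cdot(yP)=x\cdot y$ for all $x,y$, so $C\cdot P=C$ forces $C^\perp\cdot P=C^\perp$; hence $\Aut(C)=\Aut(C^\perp)$ as subgroups of the symmetric group. Consequently it suffices to examine, for each length $n$, the LCD $[n,k]$ codes with $0\le k\le n/2$.

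First I would dispose of the extreme dimensions, which are not covered by the main rows of Table~\ref{Tab:AutB}. The unique LCD codes $\{\0_n\}$ and $\FF_2^n$ (dimensions $0$ and $n$) both have automorphism group equal to the full symmetric group, of order $n!$, which exceeds $1$ for every $n>1$. For dimension $1$ (and dually $n-1$), Proposition~\ref{prop:dim1B} shows that a binary LCD $[n,1]$ code is, up to equivalence, generated by a single vector of odd weight $w$; its automorphism group fixes this codeword and is therefore the direct product of the symmetric groups on the $w$ support coordinates and on the $n-w$ remaining coordinates, of order $w!\,(n-w)!$. This equals $1$ only when $w\le 1$ and $n-w\le 1$, i.e.\ when $n\le 2$, so for every $n>2$ the dimensions $1$ and $n-1$ contribute only nontrivial automorphism groups.

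With the extreme dimensions handled, the remaining cases $2\le k\le n/2$ are read directly from Table~\ref{Tab:AutB}, whose entry $\Aut_s$ is, by construction, the smallest automorphism group order among all inequivalent binary LCD $[n,k]$ codes produced by the classification. For each fixed $n$ in the range $3\le n\le 11$ the minimum of $\Aut_s$ over the listed pairs $(n,k)$ is at least $2$ (the smallest values encountered are $2$, at $(10,5)$ and $(11,5)$), so by the preceding reduction every binary LCD code of such length $n$ has nontrivial automorphism group. For $n=12$ the pair $(12,6)$ has $\Aut_s=1$, exhibiting a binary LCD $[12,6]$ code whose automorphism group is trivial. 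Combining these observations proves the proposition.

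The genuine work, and the only real obstacle, lies not in this final deduction but in the completeness and correctness of the underlying classification: one must be sure that every inequivalent binary LCD $[n,k]$ code for $n\le 12$ has been found and that its automorphism group order has been computed correctly. This is precisely what the construction described earlier in the section guarantees, since the accumulated sum $\sum_{B}n!/|\Aut(B)|$ is forced to match the mass formula value $T_2(n,k)$ of~\eqref{eq:mf}, with the equivalences and automorphism groups independently re-verified in GAP/GUAVA. Granting that classification, the table entries are exact and the proposition follows at once.
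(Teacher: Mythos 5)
Your proposal is correct and follows essentially the same route as the paper, which simply reads the result off Table~\ref{Tab:AutB} after the classification; you merely make explicit the supporting facts the paper leaves implicit (that $\Aut(C)=\Aut(C^\perp)$ for binary codes justifies restricting to $k\le n/2$, and that the extreme dimensions $k\in\{0,1,n-1,n\}$ always give nontrivial groups for $n>2$). Nothing further is needed.
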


We remark that
the total number of inequivalent binary LCD $[n,k]$
codes for $n=3,4,\ldots,12$ and $k \le n/2$
is $6897$, $14$ of which have
trivial automorphism groups.
The $14$ codes have parameters $[12,6,3]$.
As an example, we give a binary LCD $[12,6]$ code $B_{12}$ with
trivial automorphism group.
The code $B_{12}$ has generator matrix
$\left(
\begin{array}{cc}
I_6 & M_{12}
\end{array}
\right)$, 
where 
\[
M_{12}=
\left(\begin{array}{c}
101011\\
010110\\
110100\\
110001\\
001101\\
000011
\end{array}\right).
\]

Since the smallest possible automorphism group of a binary LCD
code is the group of order $1$,
there are at least $t_2(n,k)=\lceil T_2(n,k)/n! \rceil$
inequivalent binary LCD $[n,k]$ codes.
From
\begin{align*}
&
t_2(14,1)=1,
t_2(14,2)=1,
t_2(14,3)=1,
t_2(14,4)=18,
\\&
t_2(14,5)=574,
t_2(14,6)=4659,
t_2(14,7)=9282,
\end{align*}
we have 
\[
\sum_{k=1}^{13} t_2(14,k)=19790.
\]
Hence, there are at least $19791$ inequivalent binary LCD
codes of length $14$.
Of course, many binary LCD codes have substantial automorphism groups.
Thus, the above might be a poor lower bound.
Indeed, there are $30618$ inequivalent binary LCD codes of length $13$,
although we have
\[
\sum_{k=1}^{12} t_2(13,k)=2572.
\]

We continued the above process and completed a classification
of binary LCD codes for small dimensions.
More precisely, we give a classification of
binary LCD $[n,2]$ codes for $n \le 30$ and
binary LCD $[n,3]$ codes for $n \le 25$.
In order to save space,
we only list in Table~\ref{Tab:C2}
the numbers $N$ of all inequivalent binary LCD $[n,k]$ codes
for $k=2,3$ and $n \le 25,30$, respectively.

\begin{table}[thb]
\caption{Classification of binary LCD codes of dimensions $2$ and $3$}
\label{Tab:C2}
\begin{center}
{\small
\begin{tabular}{c|c|c|c|c|c|c|c}
\noalign{\hrule height0.8pt}
$(n,k)$ & $N$ &$(n,k)$ & $N$ &$(n,k)$ & $N$ &$(n,k)$ & $N$ \\
\hline
$(14,2)$&  66 &$(19,2)$& 136 &$(24,2)$& 270 &$(29,2)$& 431\\
$(15,2)$&  73 &$(20,2)$& 166 &$(25,2)$& 286 &$(30,2)$& 495\\
$(16,2)$&  93 &$(21,2)$& 178 &$(26,2)$& 335 &&\\
$(17,2)$& 101 &$(22,2)$& 214 &$(27,2)$& 354 &&\\
$(18,2)$& 126 &$(23,2)$& 228 &$(28,2)$& 410 &&\\
\hline
$(14,3)$& 380 &$(17,3)$&1120 &$(20,3)$&2648 &$(23,3)$&6074\\
$(15,3)$& 576 &$(18,3)$&1468 &$(21,3)$&3608 &$(24,3)$&7580\\
$(16,3)$& 772 &$(19,3)$&2058 &$(22,3)$&4568 &$(25,3)$&9870\\
\noalign{\hrule height0.8pt}
\end{tabular}
}
\end{center}
\end{table}

We end this section with giving the following remark.

\begin{rem}\label{rem}
It is a fundamental problem to determine the largest minimum weight
$d(n,k)$ among all binary LCD $[n,k]$ codes.
For $1 \le k \le n \le 12$,
the values $d(n,k)$ were determined in~\cite{bound}.
For $1 \le k \le n$ and $n=13,14,15,16$,
the values $d(n,k)$ were determined in~\cite{HS}.
Also, a classification of binary
LCD $[n,k]$ codes having the minimum weight $d(n,k)$
was given in~\cite{HS} for $1 \le k \le n \le 16$.
\end{rem}

\section{Classification of ternary LCD codes of lengths up to 10}
\label{Sec:T}

By an approach is similar to that used in the previous section, 
we completed a classification of ternary LCD $[n,k]$ codes 
for $2 \le k \le n/2 \le 10/2$.
For $1 \le n \le 7$, we found all distinct ternary LCD $[n,k]$ codes,
which must be checked to achieve a complete classification.
By determining the equivalence or inequivalence 
for a given pair of ternary codes, we obtained the set $\CC_{n,k}$.
The mass formula~\eqref{eq:mf3} shows that there is no
other ternary LCD $[n,k]$ code.
For $n=8,9$ and $10$, the mass formula~\eqref{eq:mf3} was
used to complete the classification, due to the computational complexity.
For ternary LCD codes, the following method was employed.
To test equivalence of ternary codes by a program in the language C, 
we used the algorithm given in~\cite[Section 7.3.3]{KO} as follows.
For a ternary $[n,k]$ code $C$, define 
the digraph $\Gamma(C)$ with vertex set 
$C \cup (\{1,2,\dots,n\}\times (\FF_3-\{0\}))$
and arc set 
$\{(c,(j,c_j))\mid c=(c_{1},\ldots,c_{n}) \in C,  1 \le j \le n\} 
\cup \{((j,y),(j,2y))\mid
1 \le j \le n,\ y \in \FF_3-\{0\}\}$.
Then, two ternary $[n,k]$ codes $C$ and $C'$ are equivalent
if and only if $\Gamma(C)$ and $\Gamma(C')$  are isomorphic.
We used {\sc nauty}~\cite{nauty}
for digraph isomorphism testing.
The automorphism group $\Aut(C)$ is isomorphic to the
automorphism group of $\Gamma(C)$.
This calculation was also done by using
{\sc nauty}~\cite{nauty}.

As a check, 
in order to verify that $\CC_{n,k}$ contains no pair of equivalent 
ternary LCD $[n,k]$ codes and that
$\sum_{C \in \mathcal{C}_{n,k}} \frac{2^nn!}{|\Aut(C)|}=T_{3}(n,k)$,
we employed the following method obtained by applying the method given
in~\cite[Section~2]{LTP}.
Let $C$ be a ternary $[n,k]$ code.
We expand each codeword of $C$ into a binary vector of length $2n$
by mapping the elements $0,1$ and $2$ of $\FF_3$
to the binary vectors $(0,0),(0,1)$ and $(1,0)$, respectively.
If there is a positive integer $t$ such that the codewords of
weight $t$ generate $C$,
then we have an $A_t \times 2n$ binary matrix $M(C)$
composed of the binary vectors obtained from the $A_t $ codewords of
weight $t$ in $C$.
If there is no positive integer $t$ such that the codewords of
weight $t$ generate $C$,
then by considering all codewords of $C$, we have a
$3^k \times 2n$ binary matrix $M(C)$.
Then, from $M(C)$,  we have an incidence structure 
$\cD(C)$ having $2n$ points.
This calculation was done by using
the {\sc Magma} function {\tt IncidenceStructure}.
If $C$ and $C'$ are equivalent ternary codes, then
$\cD(C)$ and $\cD(C')$ are isomorphic.
By the {\sc Magma} function {\tt IsIsomorphic}, 
we verified that all incidence structures $\cD(C)$ are non-isomorphic.
The automorphism group $\Aut(C)$ is isomorphic to
the stabilizer of $\{\{1,2\},\{3,4\},\ldots,\{2n-1,2n\}\}$
inside of the automorphism group of the incidence structure
$\cD(C)$.  
This calculation was done by using
the {\sc Magma} functions {\tt AutomorphismGroup}
and {\tt Stabilizer}.


We list in Table~\ref{Tab:T}
the numbers $N$ of the inequivalent ternary LCD $[n,k]$ codes 
and 
the numbers $N_d$ of the inequivalent ternary LCD $[n,k,d]$ codes 
for $2 \le k \le n/2$ and $n \le 10$.
We also list the numbers $N_{d^\perp}$ of the dual $[n,n-k,d^\perp]$
codes of the inequivalent ternary LCD $[n,k]$ codes.
All ternary codes in the table
can be obtained electronically from
\url{http://www.math.is.tohoku.ac.jp/~mharada/LCD3/}.


\begin{table}[thb]
\caption{Classification of ternary LCD codes}
\label{Tab:T}
\begin{center}
{\footnotesize
\begin{tabular}{c|c|ccccccc|cccc}
\noalign{\hrule height0.8pt}
$(n,k)$ & $N$ 
& $N_1$ & $N_2$ & $N_3$ & $N_4$ & $N_5$ & $N_6$ 
& $N_7$
& $N_{1^\perp}$ & $N_{2^\perp}$ & $N_{3^\perp}$ & $N_{4^\perp}$     \\
\hline
$(4, 2)$&   4&   2&  2&   &   &  &  &  &       2&  2&  &    \\
\hline
$(5, 2)$&   7&   3&  3&  1&   &  &  &  &       4&  3&  &    \\
\hline
$(6, 2)$&  11&   4&  4&  2&  1&  &  &  &       7&  4&  &    \\
$(6, 3)$&  17&   7&  8&  2&   &  &  &  &       7&  8& 2&    \\
\hline
$(7, 2)$&  16&   4&  6&  3&  3&  &  &  &      11&  5&  &    \\
$(7, 3)$&  36&  11& 17&  7&  1&  &  &  &      17& 17& 2&    \\
\hline
$(8, 2)$&  24&   5&  7&  4&  6& 2&  &  &      16&  8&  &    \\
$(8, 3)$&  74&  16& 31& 19&  8&  &  &  &      36& 37& 1&    \\
$(8, 4)$& 121&  36& 64& 19&  2&  &  &  &      36& 64&19& 2  \\
\hline
$(9, 2)$&  33&   6&  8&  5&  9& 4& 1&  &      24&  9&  &    \\
$(9, 3)$& 149&  24& 51& 40& 31& 3&  &  &      74& 74& 1&    \\
$(9, 4)$& 379&  74&178&105& 22&  &  &  &     121&218&40&    \\
\hline
$(10, 2)$&   45&    6&   10&   6&  11&  8& 3& 1  &      33&   12&    &    \\
$(10, 3)$&  290&   33&   80&  70&  84& 22& 1&    &     149&  140&   1&    \\
$(10, 4)$& 1293&  149&  458& 431& 249&  6&  &    &     379&  821&  93&    \\
$(10, 5)$& 2318&  379& 1209& 665&  65&   &  &    &     379& 1209& 665& 65 \\
\noalign{\hrule height0.8pt}
\end{tabular}
}
\end{center}
\end{table}

\begin{table}[thb]
\caption{Smallest automorphism groups}
\label{Tab:AutT}
\begin{center}
{\small
\begin{tabular}{c|c|c|c|c|c|c|c}
\noalign{\hrule height0.8pt}
$(n,k)$ & $\Aut_s$ &$(n,k)$ & $\Aut_s$ &
$(n,k)$ & $\Aut_s$ &$(n,k)$ & $\Aut_s$ \\
\hline
$(2, 1)$& 4 &$(5, 1)$& 96 &$(6, 3)$& 8    &$(8, 1)$& 11520 \\
$(3, 1)$& 8 &$(5, 2)$& 16 &$(7, 1)$& 1920 &$(8, 2)$& 96 \\
$(4, 1)$& 32&$(6, 1)$& 384&$(7, 2)$& 48   &$(8, 3)$& 8 \\
$(4, 2)$& 8 &$(6, 2)$& 24 &$(7, 3)$& 8    &$(8, 4)$& 2 \\
\noalign{\hrule height0.8pt}
\end{tabular}
}
\end{center}
\end{table}

The smallest possible automorphism group of a ternary LCD
code is the group of order $2$.
We list in Table~\ref{Tab:AutT} the smallest value $\Aut_s$ 
among the orders of the automorphism groups of ternary
LCD $[n,k]$ codes for $n \le 8$.
From the table,  we have the following:

\begin{prop}
The smallest length $n$ for which there is a ternary
LCD code of length $n$ with  automorphism group of order $2$ is 
$8$.
\end{prop}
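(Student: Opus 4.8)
The plan is to derive the statement directly from the complete classification of ternary LCD codes of length at most $8$ carried out in this section, together with the fact that passing to the dual code leaves the automorphism group unchanged. Concretely, two things must be shown: that some ternary LCD code of length $8$ has automorphism group of order exactly $2$, and that no ternary LCD code of length $n$ with $2 \le n \le 7$ admits an automorphism group this small. Since every code under consideration has $1 \le k \le n-1$, there is no code to examine at length $n = 1$.

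First I would record the duality that halves the range of dimensions to inspect. Every monomial matrix $P$ over $\FF_3$ is a diagonal matrix with entries in $\{1,2\}$ times a permutation matrix, and because $1^2 = 2^2 = 1$ in $\FF_3$ such a $P$ satisfies $P P^T = I_n$. Hence $(xP)\cdot(yP) = x P P^T y^T = x \cdot y$ for all $x,y \in \FF_3^n$, so any $P \in \Aut(C)$ fixes $C^\perp$ as well; by symmetry $\Aut(C) = \Aut(C^\perp)$. Consequently the set of automorphism group orders occurring among ternary LCD $[n,k]$ codes coincides with the one for $[n,n-k]$ codes, and it suffices to inspect the dimensions $1 \le k \le \lfloor n/2 \rfloor$ recorded in Table~\ref{Tab:AutT}.

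With this reduction, the statement is read off from the table. For existence, the entry at $(n,k) = (8,4)$ has $\Aut_s = 2$; since $\Aut_s$ is by definition the minimum order of $\Aut(C)$ over all ternary LCD $[8,4]$ codes, at least one of the $121$ inequivalent such codes of Table~\ref{Tab:T} has automorphism group of order $2$, and one may exhibit a generator matrix $\left( I_4 \mid A \right)$ for it. For nonexistence at the shorter lengths, I would verify that for every $n$ with $2 \le n \le 7$ and every $k$ with $1 \le k \le \lfloor n/2 \rfloor$ the tabulated $\Aut_s$ is at least $4$; by the duality above the same lower bound then holds for all $1 \le k \le n-1$. Thus $|\Aut(C)| \ge 4 > 2$ for every ternary LCD code $C$ of length at most $7$, and $8$ is the smallest admissible length.

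The real content lies not in this final deduction, which is bookkeeping, but in the completeness of the underlying classification; the one point requiring care is that the tabulated automorphism groups are exhaustive. This is exactly what the mass formula~\eqref{eq:mf3} certifies: for each relevant $(n,k)$ the classification was confirmed by checking $\sum_{C} 2^n n!/|\Aut(C)| = T_3(n,k)$, so no ternary LCD code, and hence no automorphism group order, has been overlooked. Granting this, the proposition follows.
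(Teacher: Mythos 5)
Your proposal is correct and follows essentially the same route as the paper, which simply reads the result off Table~\ref{Tab:AutT} (whose completeness rests on the classification certified by the mass formula~\eqref{eq:mf3}). You merely make explicit two points the paper leaves implicit, namely that $\Aut(C)=\Aut(C^\perp)$ justifies restricting to $k\le n/2$ and that the convention $1\le k\le n-1$ excludes the degenerate length-$1$ case; both are handled correctly.
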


We remark that
the total number of inequivalent ternary LCD $[n,k]$
codes for $n=2,3,\ldots,8$ and $k \le n/2$
is $336$, one of which has
automorphism group of order $2$.
The code has parameters $[8,4,3]$.
As an example, we give a ternary LCD $[8,4]$ code $C_{8}$ with
automorphism group of order $2$.
The code $C_{8}$ has generator matrix
$\left(
\begin{array}{cc}
I_4 & M_{8}
\end{array}
\right)$, 
where 
\[
M_{8}=
\left(\begin{array}{c}
2001\\
2212\\
1100\\
1012
\end{array}\right).
\]

Since the smallest possible automorphism group of a ternary LCD
code is the group of order $2$,
there are at least $t_3(n,k)=\lceil T_3(n,k)/(2^{n-1}n!) \rceil$
inequivalent ternary LCD $[n,k]$ codes.
From
\begin{align*}
&
t_3(11,1)=1,
t_3(11,2)=1,
t_3(11,3)=4,
\\&
t_3(11,4)=319,
t_3(11,5)=2869,
\end{align*}
we have 
\[
\sum_{k=1}^{10} t_3(11,k)=6388.
\]
Hence, there are at least $6389$ inequivalent ternary LCD
codes of length $11$.
Of course, many ternary LCD codes have substantial automorphism groups.
Thus, the above might be a poor lower bound.
Indeed, there are $5588$ inequivalent ternary LCD codes of length $10$,
although we have
\[
\sum_{k=1}^{9} t_3(10,k)=447.
\]

Similar to Remark~\ref{rem}, 
we end this section with giving the following remark.

\begin{rem}
Let $d_3(n,k)$ and $d^{\text{all}}_3(n,k)$ denote
the largest minimum weight
among all ternary LCD $[n,k]$ codes and
among all ternary $[n,k]$ codes, respectively.
Suppose that $2 \le k \le n-2$ and $4 \le n \le 10$.
From Table~\ref{Tab:T}, we have
\[
d_3(n,k)=
\begin{cases}
d^{\text{all}}_3(n,k)-1 &\text{ if } (n,k) \in S,\\
d^{\text{all}}_3(n,k) & \text{ otherwise,} 
\end{cases}
\]
where
\[
S=\{(4,2),(7,2),(8,2),(8,3),(9,3),(9,4),(9,5),(10,4),(10,5),(10,6)\}.
\]
Note that the values $d^{\text{all}}_3(n,k)$  can be found in~\cite{G}.
\end{rem}

\bigskip
\noindent
{\bf Acknowledgment.}
This work was supported by JSPS KAKENHI Grant Number 15H03633.



\begin{landscape}

\begin{table}[thb]
\caption{Classification of binary LCD codes}
\label{Tab:C}
\begin{center}
{\footnotesize
\begin{tabular}{c|c|cccccccc|ccccc}
\noalign{\hrule height0.8pt}
$(n,k)$ & $N$ 
& $N_1$ & $N_2$ & $N_3$ & $N_4$ & $N_5$ & $N_6$ 
& $N_7$ & $N_8$ 
& $N_{1^\perp}$ & $N_{2^\perp}$ & $N_{3^\perp}$ & $N_{4^\perp}$ \\
\hline
$( 4,2)$ &    4  & 2 & 2 &  &   &  &   &  &        & 2 & 2 &  &    \\
\hline
$( 5,2)$ &    5  & 2 & 3 &  &   &  &   &  &         & 4 & 1 &  &    \\
\hline
$( 6,2)$ &    9  & 3 & 4 & 2 &  &   &  &   &       & 5 & 4 &  &    \\
$( 6,3)$ &    8  & 5 & 3 &  &   &  &   &  &        & 5 & 3 &  &    \\
\hline
$( 7,2)$ &   11  & 3 & 5 & 2 & 1 &  &   &  &       & 9 & 2 &  &    \\
$( 7,3)$ &   17  & 9 & 7 & 1 &  &   &  &   &       & 8 & 9 &  &    \\
\hline
$( 8,2)$ &   17  & 4 & 6 & 4 & 2 & 1 &  &   &      & 11 & 6 &  &    \\
$( 8,3)$ &   26  & 11 & 12 & 3 &  &   &  &   &     & 17 & 9 &  &    \\
$( 8,4)$ &   42  & 17 & 24 & 1 &  &   &  &   &     & 17 & 24 & 1 &    \\
\hline
$( 9,2)$ &   20  & 4 & 7 & 4 & 3 & 1 & 1 &  &       & 17 & 3 &  &     \\
$( 9,3)$ &   49  & 17 & 20 & 11 & 1 &  &   &  &     & 26 & 23 &  &    \\
$( 9,4)$ &   81  & 26 & 49 & 5 & 1 &  &   &  &      & 42 & 37 & 2 &    \\
\hline
$(10,2)$ &   29  & 5 & 8 & 6 & 4 & 4 & 2 &  &       & 20 & 9 &  &    \\
$(10,3)$ &   72  & 20 & 29 & 18 & 4 & 1 &  &   &    & 49 & 23 &  &    \\
$(10,4)$ &  186  & 49 & 109 & 23 & 5 &  &   &  &    & 81 & 103 & 2 &  \\
$(10,5)$ &  204  & 81 & 112 & 11 &  &   &  &   &    & 81 & 112 & 11 & \\
\hline
$(11,2)$ &   33  & 5 & 9 & 6 & 5 & 4 & 4 &  &   & 29 & 4 &  &   \\
$(11,3)$ &  123  & 29 & 42 & 35 & 11 & 6 &  &   &  & 72 & 51 &  &    \\
$(11,4)$ &  348  & 72 & 195 & 61 & 20 &  &   &  &   & 186 & 161 & 1 &    \\
$(11,5)$ &  606  & 186 & 350 & 66 & 4 &  &   &  &   & 204 & 386 & 15 & 1  \\
\hline
$(12,2)$ &   45  & 6 & 10 & 8 & 6 & 7 & 6 & 2 &    & 33 & 12 &  &     \\
$(12,3)$ &  174  & 33 & 56 & 48 & 22 & 14 & 1 &   && 123 & 51 &  &     \\
$(12,4)$ &  744  & 123 & 369 & 170 & 76 & 6 &  &   && 348 & 396 &  &     \\
$(12,5)$ & 1584  & 348 & 909 & 290 & 37 &  &   &  && 606 & 956 & 22 &     \\
$(12,6)$ & 2426  & 606 & 1622 & 187 & 11 &  &   &  && 606 & 1622 & 187 & 11  \\
\hline
$(13, 2)$&   50&     6&   11&    8&   7&  7& 8& 2& 1&   45&    5&    &  \\
$(13, 3)$&  277&    45&   75&   77&  39& 35& 6&  &  &  174&  103&    &  \\
$(13, 4)$& 1363&   174&  598&  341& 217& 31& 2&  &  &  744&  619&    &  \\
$(13, 5)$& 4576&   744& 2354& 1178& 295&  5&  &  &  & 1584& 2965&  27&  \\
$(13, 6)$& 9036&  1584& 5900& 1406& 146&   &  &  &  & 2426& 6086& 520& 4\\
\noalign{\hrule height0.8pt}
\end{tabular}
}
\end{center}
\end{table}

\end{landscape}

\end{document}